\newtheorem{theorem}{Theorem}    % Standard theorem environment
\newtheorem{lemma}{Lemma} 
\newtheorem{claim}{Claim} 
\newtheorem{proposition}{Proposition} 
\newtheorem{question}{Question} 
\theoremstyle{definition}
\newtheorem*{remark*}{Remark}
\newcommand{\Z}{\mathbb{Z}}
\title[Chirally cosmetic surgery on cable knots]{A note on chirally cosmetic surgery on cable knots}
\author[T.Ito]{Tetsuya Ito}
\address{Department of Mathematics, Kyoto University, Kyoto 606-8502, JAPAN}
\email{tetitoh@math.kyoto-u.ac.jp}
\subjclass[2010]{Primary~57M25, Secondary~57M27}
\keywords{}
\begin{document}

\begin{abstract}
We show that a $(p,q)$-cable of a non-trivial knot $K$ does not admit chirally cosmetic surgery for $q\neq 2$, or $q=2$ with additional assumptions. In particular, we show that $(p,q)$-cable of non-trivial knot $K$ does not admit chirally cosmetic surgery as long as the JSJ piece of knot exterior does not contain $(2,r)$-torus exterior. We also show that an iterated torus knot other than $(2,p)$-torus knot does not admit chirally cosmetic surgery.
\end{abstract}

\maketitle

\section{Introduction}
Let $S^{3}_K(m \slash n)$ be the Dehn surgery along a knot $K$ in $S^{3}$ of slope $m\slash n$. Two Dehn surgeries $S^{3}_K(m\slash n)$ and $S^{3}_K(m'\slash n')$ are \emph{purely cosmetic} (resp. \emph{chirally cosmetic}) if $S^{3}_K(m\slash n) \cong S^{3}_K(m'\slash n')$ (resp. $S^{3}_K(m\slash n) \cong -S^{3}_K(m'\slash n')$). Here for an oriented 3-manifold $M$ we denote by $-M$ the same 3-manifold with opposite orientation, and $M \cong N$ means that they are orientation preservingly homeomorphic.

It is expected that a non-trivial knot $K$ in $S^{3}$ does not have purely cosmetic surgeries (cosmetic surgery conjecture \cite[Problem 1.81 (A)]{Kirby}), whereas there are two families of chirally cosmetic surgeries on non trivial knots;
\begin{itemize}
\item[(a)] For amphicheiral knot $K$, $S^{3}_K(m\slash n) \cong -S^{3}(-m\slash n)$.
\item[(b)] for $(2,r)$-torus knot $K$ we have
$S^{3}_K(\frac{2r^{2}(2m+1)}{r(2m+1)+1})\cong -S^{3}_K(\frac{2r^{2}(2m+1)}{r(2m+1)-1})$ for any $m\in \Z$.
\end{itemize}

Since currently no other examples of chirally cosmetic surgery of knots in $S^{3}$ are known, one encounters a natural question.
\begin{question}
\label{question:main}
Is chirally cosmetic surgery of knots in $S^{3}$ either (a) or (b) ?
\end{question}

At first glance this may sound too optimistic since there are several unexpected phenomenon or clever constructions which negate naive conjectures on Dehn surgeries. Moreover, when we extend our attention to knots in general 3-manifolds $M$, there are more examples of chirally cosmetic surgeries which are not generalizations of above examples (a) and (b) \cite{BHW,IJ}. 

Nevertheless, recently we observed some results supporting the affirmative answer to this question \cite{Ito,IIS}. In this note we show a non-existence of chirally cosmetic surgery for cable knots under some technical assumptions. 

Let $E(K)= S^{3} \setminus N(K)$ be the knot exterior, where $N(K)$ denotes an open tubular neighborhood of $K$. There is a family of essential tori $\mathcal{T}=\{T_1,\ldots,T_n\}$ (possibly empty) of $E(K)$ such that each component of $E(K)\setminus \mathcal{T} := E(K) \setminus (\bigcup_i T_i)$ is geometric (i.e., either hyperbolic or Seifert fibered). Such a family of tori $\mathcal{T}$, called the \emph{JSJ tori}, is unique up to isotopy when we take a minumum one. We call a connected component $X$ of $E(K) \setminus \mathcal{T}$ the \emph{JSJ piece} of $E(K)$.

\begin{theorem}
\label{theorem:main}
Let $K_{p,q}$ be the $(p,q)$-cable of a non-trivial knot $K$. 
Assume that one of the following conditions are satisfied.
\begin{itemize}
\item[(i)] $q\neq 2$.
\item[(ii)] $q=2$, $p\neq \pm 1$, and the JSJ piece of $E(K)$ does not contain $(-p,2)$-torus knot exterior.
\item[(iii)] $q=2$, $p=\pm 1$ and the JSJ piece of $E(K)$ does not contain $(r,2)$-torus knot exterior for any $r$.
\item[(iv)] $q=2$, $p=\pm 1$ and $a_{2}(K) \neq 0$.
\end{itemize} 
Then $K_{p,q}$ does not admit chirally cosmetic surgeries.
\end{theorem}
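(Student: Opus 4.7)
The plan is to analyze chirally cosmetic surgery through the JSJ decomposition of $E(K_{p,q})$. Since $K$ is non-trivial, the exterior decomposes along an essential torus $T$ as $E(K_{p,q}) = C_{p,q} \cup_T E(K)$, where $C_{p,q}$ is the $(p,q)$-cable space. I would split the argument according to whether the surgery slope $m/n$ equals the cabling slope $pq$ or not.

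The cabling-slope case is reducible: by Gordon's theorem, $S^3_{K_{p,q}}(pq) = L(q,p)\# S^3_K(p/q)$, a non-trivial connect sum. A chirally cosmetic pair of such manifolds, together with uniqueness of prime decomposition, known cosmetic-surgery obstructions for $S^3_K(p/q)$, and the classification of amphichiral lens spaces, forces a contradiction. Mixed pairings (one cabling slope, one not) are excluded because non-cabling surgeries on $K_{p,q}$ are irreducible.

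For non-cabling slopes the JSJ structure of the surgery manifold is inherited from $E(K_{p,q})$: the filled cable space becomes a Seifert fibered piece over a disk with two cone points of orders $p$ and $q$, glued to $E(K)$ along $T$. Any chirally cosmetic homeomorphism preserves the JSJ decomposition and restricts to an orientation-reversing homeomorphism between these Seifert pieces. Under hypothesis (i), $q \geq 3$ together with $\gcd(p,q)=1$ ensures uniqueness of the Seifert fibration, and comparing Seifert invariants with their orientation-reversed counterparts yields an arithmetic obstruction arising from the cone point of order $q$; the surgery slopes $m/n$ and $m'/n'$ on each side are then read off the Seifert invariants and cannot be made consistent.

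The main obstacle is the $q = 2$ cases, since $C_{p,2}$ carries an extra $\Z/2$ symmetry that can in principle realize chirally cosmetic surgery, as exemplified by family (b) for $(2,r)$-torus knots. Hypotheses (ii) and (iii) are designed to prevent this symmetry from extending across $T$: if the adjacent JSJ piece of $E(K)$ were a $(-p,2)$- or $(r,2)$-torus knot exterior, the two blocks could amalgamate into a larger symmetric Seifert piece supporting a chirally cosmetic filling in the style of (b), so excluding this configuration leaves the same rigidity as in case (i). In case (iv), I would instead appeal to the Casson--Walker invariant: the relation $\lambda(-M) = -\lambda(M)$, combined with the Boyer--Lines surgery formula and the cable formula $a_2(K_{p,q}) = q^2\, a_2(K) + a_2(T_{p,q})$, turns the chirally cosmetic hypothesis into a Diophantine identity in $(m,n,m',n',p)$ whose linear-in-$a_2(K)$ term cannot cancel when $a_2(K)\neq 0$, yielding a contradiction.
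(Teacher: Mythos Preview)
Your outline has two structural gaps. First, Gordon's formula for $S^{3}_{K_{p,q}}(m/n)$ is a trichotomy in $|npq-m|$, not a dichotomy ``cabling slope vs.\ not'': when $|npq-m|=1$ the surgery collapses to $S^{3}_{K}(m/nq^{2})$ and the filled cable piece is a \emph{solid torus}, so there is no Seifert JSJ piece over the disk at all. You therefore need separate arguments when $|npq-m|=|n'pq-m|=1$ (the paper uses an L-space/Casson--Walker contradiction) and when exactly one of them equals $1$ (the paper compares simplicial volume or counts JSJ tori). Incidentally, when $|npq-m|>1$ the Seifert piece $P_{p,q,m,n}\cong M(0,1;\tfrac{r}{q},\tfrac{n}{npq-m})$ has singular fibers of multiplicities $q$ and $|npq-m|$, not $p$ and $q$.

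The more serious gap is your assumption that a chirally cosmetic homeomorphism $f$ must carry $P_{p,q,m,n}$ to $-P_{p,q,m,n'}$. A homeomorphism preserves the JSJ \emph{decomposition}, but it may permute the pieces: $f(P_{p,q,m,n})$ could land on any JSJ piece of $-E(K)$ that happens to be homeomorphic to $P_{p,q,m,n}$. In fact the paper proves unconditionally (its Claim~1) that $f(P_{p,q,m,n})\neq -P_{p,q,m,n'}$, so the case you analyze is vacuous. The entire content of the theorem lies in the remaining case, where $f(P_{p,q,m,n})$ is a JSJ piece of $-E(K)$; one then finds a JSJ piece $X_1\subset E(K)$ with $X_1\cong -P_{p,q,m,n}$, asks where $f$ sends $X_1$, and iterates. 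Hypotheses (ii)--(iv) are exactly what prevent any such $X_i$ from mapping onto $-P_{p,q,m,n'}$ (this is where the $(-p,2)$- or $(r,2)$-torus knot exterior and the condition $a_2(K)\neq 0$ enter), forcing an infinite chain of distinct JSJ pieces in $E(K)$. Your explanation of (ii)--(iii) in terms of ``amalgamation into a larger symmetric Seifert piece'' is not the mechanism at work.
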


Here $a_2(K)$ is the coefficient of $z^{2}$ for the Conway polynomial $\nabla_K(z)$. We remark that in our notation the $(p,q)$ cable $K_{p,q}$ of $K$ is defined so that it has wrapping number $q$; $K_{p,q}$ intersects with $\{pt\} \times D^{2} \subset S^{1} \times D^{2} \cong N(K)$ at $q$ points.

We mention that a non-existence of purely cosmetic surgery of cable knots are shown in \cite{Tao}. Although there are many similarities we do not use this result. Indeed, a mild modification of the proof of Theorem \ref{theorem:main} proves a non-existence of purely cosmetic surgery on cable knots.

In a light of an example (b) of chirally cosmetic surgery and Theorem \ref{theorem:main}, one may think that an iterated torus knot is a possible candidate for new chirally cosmetic surgeries. However, we show that iterated torus knots does not admit chirally cosmetic surgery.

\begin{theorem}
\label{theorem:main2}
An iterated torus knot which is not a $(2,p)$-torus knot does not admit chirally cosmetic surgeries.
\end{theorem}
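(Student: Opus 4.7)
The plan is to deduce Theorem~\ref{theorem:main2} from Theorem~\ref{theorem:main} by induction on the depth of the iterated cabling, together with a known non-existence result for chirally cosmetic surgery on torus knots other than $(2,r)$-torus knots.

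An iterated torus knot $J$ is either a torus knot $T_{a,b}$ or a non-trivial cable $K_{p,q}$ whose companion $K$ is a non-trivial iterated torus knot. In the first case, the hypothesis that $J$ is not a $(2,r)$-torus knot forces $|a|,|b|\geq 3$, and the non-existence of chirally cosmetic surgery on such torus knots is already established in the author's previous papers \cite{Ito, IIS}, which I would simply quote. In the second case, I would apply Theorem~\ref{theorem:main} to $J=K_{p,q}$ by case analysis in $(p,q)$. When $q\neq 2$, condition~(i) gives the conclusion at once. When $q=2$ and $p=\pm 1$, I would invoke condition~(iv); its hypothesis $a_2(K)\neq 0$ follows from the satellite formula $a_2(L_{p',q'})=a_2(T_{p',q'})+(q')^2 a_2(L)$ combined with $a_2(T_{a,b})=(a^2-1)(b^2-1)/24>0$ for every non-trivial torus knot, which inductively yields $a_2>0$ on every non-trivial iterated torus knot. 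When $q=2$ and $p\neq\pm 1$, I would invoke condition~(ii); since the JSJ pieces of $E(K)$ for an iterated torus knot consist of cable spaces together with the exterior of the innermost torus knot $T_{a_1,b_1}$ used to build $K$, verifying~(ii) amounts to checking that $T_{a_1,b_1}$ is not the oriented $(-p,2)$-torus knot.

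The hardest step I foresee is the exceptional sub-case in which $q=2$, $p\neq\pm 1$, and the innermost torus knot of $K$ is exactly $T_{-p,2}$, so that condition~(ii) does not apply directly. Here I would need a supplementary argument tailored to this configuration, most plausibly an $a_2$-based obstruction applied to $J$ itself, noting that $a_2(J)>0$ by the same recursion, in the spirit of the proof of condition~(iv) of Theorem~\ref{theorem:main} and exploiting the very rigid JSJ structure that such a $J$ must have.
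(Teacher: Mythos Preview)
Your reduction to Theorem~\ref{theorem:main} is sound in three of the four sub-cases, and your computation that $a_2>0$ on every non-trivial iterated torus knot is correct and does discharge case (iv). The exceptional sub-case you isolate---$q=2$, $p\neq\pm1$, and the innermost JSJ piece of $E(K)$ is $E(T_{-p,2})$---is a genuine gap, and your proposed $a_2$-based fix cannot close it. The $a_2$ obstruction in the proof of Theorem~\ref{theorem:main} arises only in Claim~\ref{claim:B}, which analyses a JSJ piece $X\cong P_{p,q,m,n}$ being sent to $-P_{p,q,m,n'}$; that claim already forces $p=\pm1$, so when $p\neq\pm1$ the descent never passes through Claim~\ref{claim:B} and no $a_2$ condition ever appears. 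The obstruction in the problematic sub-case comes instead from Claim~\ref{claim:A}, whose output is not a Casson--Walker identity but the conclusion that $K_{p,q}$ is an \emph{L-space knot} (via $n-n'=1$ and Theorem~\ref{theorem:HF}). Knowing $a_2(J)>0$ is entirely compatible with $J$ being an L-space knot, so no contradiction is obtained.

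The paper's argument bypasses the infinite descent altogether in the iterated torus knot setting. It exploits your ``rigid JSJ structure'' in a concrete way: since $P_{p,q,m,n}$ has a single boundary component while every cable space $C_{p_i,q_i}$ has two, $f$ must send $P_{p,q,m,n}$ to the \emph{unique} one-boundary piece $-E(T_{p_1,q_1})$, and symmetrically $f(E(T_{p_1,q_1}))=-P_{p,q,m,n'}$. Claim~\ref{claim:A} then gives $q_1=2$, $p_1=-p$, and that $K^*_{p,q}$ is an L-space knot. The contradiction comes from Hom's characterisation \cite{Hom} of L-space iterated torus knots: the cabling parameters $-p,p_2,\ldots,p$ would all have to share the same sign, which is impossible. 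This L-space/sign argument is the missing idea in your sketch; without it (or an equivalent replacement) the exceptional configuration survives.
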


\section*{Acknowledgement}
The author has been partially supported by JSPS KAKENHI Grant Number 19K03490,16H02145. He is grateful to Kazuhiro Ichihara for invaluable comments and
discussions.

\section{Dehn surgery of cable knots}
\label{section:Dehn}

For a torus boundary component $T$ of a 3-manifold $X$, a \emph{slope} $\gamma$ (on $T$) is an isotopy class of a non-trivial unoriented simple closed curve on $T$. We take an ordered basis $(\alpha,\beta)$ of $H_{1}(T;\Z)$ to identify the set of slopes with $\mathbb{Q}\cup \{\infty=\frac{1}{0}\}$; We view $\gamma$ as an oriented simple closed curve by taking one of its orientation, its homology class is written by $[\gamma]=p\alpha +q\beta \in H_{1}(T;\Z)$ for coprime integers $p$ and $q$. Then we assign the slope $\gamma$ a rational number $p \slash q \in \mathbb{Q} \cup \{\infty=\frac{1}{0}\}$ (note that $p$ and $q$ depend on a choice of orientation, whereas $p\slash q$ does not).

In a case of knot complement $E(K)$, we take the standard meridian-longitude pair $([\mu],[\lambda])$ as an ordered basis of $H_{1}(\partial E(K);\Z)$. The $m\slash n$-surgery on $K$ is the 3-manifold $S^{3}_K(m\slash n)$ obtained from $E(K)$ by attaching the solid torus $S^{1}\times D^{2}$ along $\partial E(K)$ so that the slope $m\slash n$ bounds a disk in the attached solid torus.

The $(p,q)$ torus knot $T_{p,q}$ is a slope $\frac{p}{q}$ curve on a boundary of the standardly embedded solid torus $S^{1}\times D^{2}$ in $S^{3}$, with respect to the basis $([\{\ast\}\times \partial D^{2}], [S^{1} \times \{\ast\}])$ of $H_{1}(\partial S^{1} \times D^{2};\mathbb{Z})$.
Thus in our convention the $(p,q)$-torus knot $T_{p,q}$ is the closure of the $q$-braid $(\sigma_1\cdots \sigma_{q-1})^{p}$. 
In the following, we will often view $T_{p,q}$ as a knot in the solid torus $S^{1} \times D^{2}$.

The $(p,q)$-cable $K_{p,q}$ of the knot $K$ is the image $f(T_{p,q})$ of the standard torus knot $T_{p,q} \subset S^{1} \times D^{2}$, where $f: S^{1} \times D^{2} \rightarrow \overline{N(K)}$ is a homeomophism such that $f(S^{1}\times \{*\}) = \lambda, f(\{*\} \times \partial D^{2})=\mu$, and $\overline{N(K)}$ denotes the closure of $N(K)$.
Since $K_{p,q} =K$ if $q=1$, in the following we always assume that $q>1$.

By \cite{Gordon}, the Dehn surgery along cable knot is described as follows;
\[ S^{3}_{K_{p,q}}(m\slash n) = \begin{cases} S^{3}_{K}(p\slash q) \# L(q,p) & |npq-m|=0, \\
S^{3}_K(m\slash nq^{2}) & |npq-m|=1, \\
E(K)\cup_{T} P_{p,q,m,n} & |npq-m|>1.
\end{cases}
\]
In the last case $P_{p,q,m,n}$ is a Seifert fibered space with base surface $D^{2}$ having two singular fibers, glued along the boundary $T:=\partial E(K)$ of $E(K)$. Moreover $P_{p,q,m,n}$ is a JSJ piece of $S^{3}_{K_{p,q}}(m\slash n)$.

In the following we prove Theorem \ref{theorem:main} by dividing arguments into the following four cases, according to $|npq-m|$ and $|n'pq-m|$.

\begin{itemize}
\item[Case 1:] $|npq-m|=0$ (Lemma \ref{lemma:Case1})
\item[Case 2:] $|npq-m|=|n'pq-m|=1$ (Lemma \ref{lemma:Case2}). 
\item[Case 3:] $|npq-m|=1$, $|n'pq-m|>1$ (Lemma \ref{lemma:Case3}).
\item[Case 4:] $|npq-m|,|n'pq-m|>1$ (Lemma \ref{lemma:Case4}).
\end{itemize}
It is Case 4 where we use additional assumptions (i)--(iv).

Before starting discussions, we review some known results on chirally cosmetic surgery which will be used in the argument.

A knot $K$ is an \emph{L-space knot} if a Dehn surgery on $K$ yields an L-space. For an L-space knot $K$, its Alexander polynomial $\Delta_K(t)$, normalized so that $\Delta_K(t)=\Delta_K(t^{-1})$ and $\Delta_K(1)=1$ hold, is of the form
\[ \Delta_K(t)=(-1)^k + \sum_{j=1}^{k}(-1)^{k-j}(t^{n_j}+t^{-n_j})\] 
for some $0<n_1<n_2 <\cdots <n_k=g(K)$ \cite[Corollary 1.3]{OzSz1}.
From this property, we have the following.
\begin{proposition}
\label{proposition:a2}
If $K$ is an L-space knot which is not unknot, $a_2(K)\neq 0$.
\end{proposition}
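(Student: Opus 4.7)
The plan is to compute $a_2(K)$ directly from the Ozsv\'ath--Szab\'o formula for the Alexander polynomial of an L-space knot, and then show the resulting expression is strictly positive.

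First I would recall the standard identity $a_2(K) = \tfrac{1}{2}\Delta_K''(1)$. This follows from the substitution $z = t^{1/2} - t^{-1/2}$: writing $\nabla_K(z) = 1 + a_2 z^2 + a_4 z^4 + \cdots$ and applying the chain rule to $\Delta_K(t) = \nabla_K(z(t))$, using $z(1)=0$, $z'(1)=1$, $z''(1)=-1$, gives $\Delta_K''(1) = \nabla_K''(0) = 2 a_2(K)$. Since $\frac{d^2}{dt^2}(t^n + t^{-n})\big|_{t=1} = 2n^2$, applying this to the formula
\[ \Delta_K(t) = (-1)^k + \sum_{j=1}^{k}(-1)^{k-j}(t^{n_j}+t^{-n_j}) \]
yields
\[ a_2(K) = \sum_{j=1}^{k} (-1)^{k-j} n_j^2. \]

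Next I would verify that this alternating sum is strictly positive whenever $k \ge 1$. Group the terms from the top: the partial sums
\[ (n_k^2 - n_{k-1}^2) + (n_{k-2}^2 - n_{k-3}^2) + \cdots \]
consist of pairs that are strictly positive because $0 < n_1 < n_2 < \cdots < n_k$. If $k$ is even the pairing exhausts the sum, and if $k$ is odd a single extra positive term $n_1^2$ remains. In either case $a_2(K) > 0$.

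Finally, since $K$ is not the unknot, $g(K) \ge 1$ and therefore $k \ge 1$ in the Ozsv\'ath--Szab\'o expansion, so the strict inequality applies and $a_2(K) \neq 0$. There is no real obstacle in this argument; the only thing to be careful about is the precise normalization linking $a_2(K)$ to $\Delta_K''(1)$, which is why I would spell out the chain-rule computation explicitly rather than just quote it.
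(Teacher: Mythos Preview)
Your proof is correct and follows the same approach as the paper: both compute $a_2(K)=\tfrac{1}{2}\Delta_K''(1)=\sum_{j=1}^{k}(-1)^{k-j}n_j^{2}$ from the Ozsv\'ath--Szab\'o form of $\Delta_K$. The paper simply asserts this alternating sum is nonzero, while you supply the extra pairing argument showing it is in fact strictly positive, which is a worthwhile addition.
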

\begin{proof}
The coefficient of $z^{2}$ of the Conway polynomial $\nabla_K(z)$ is given by 
\[ a_2(K)=\frac{1}{2}\Delta''_K(t)=\sum_{j=1}^{k}(-1)^{k-j}n_j^{2} \neq 0.\]
\end{proof}

The relevance of L-space knots and (chirally) cosmetic surgery comes from the following result.

\begin{theorem}\cite[Theorem 1.6]{OzSz2}
\label{theorem:HF}
If $S^{3}_K(r)\cong \pm S^{3}_K(r')$ with $rr'>0$, then $K$ is an L-space knot.
\end{theorem}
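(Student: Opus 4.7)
The plan is to use the Heegaard Floer correction terms ($d$-invariants). For a rational homology $3$-sphere $Y$ and a Spin$^{c}$ structure $\mathfrak{s}$, $d(Y,\mathfrak{s})$ is a rational invariant satisfying $d(-Y,\mathfrak{s})=-d(Y,\overline{\mathfrak{s}})$. Consequently, any homeomorphism $S^{3}_K(r)\cong \pm S^{3}_K(r')$ induces a bijection between the Spin$^{c}$ structures on the two sides that matches $d$-invariants up to a global sign ($+$ in the purely cosmetic case and $-$ in the chirally cosmetic case). In particular, the multisets of $d$-invariants of $S^{3}_K(r)$ and of $S^{3}_K(r')$ coincide up to sign.

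The second ingredient is the Ozsv\'ath--Szab\'o rational surgery formula, which for $p/q>0$ reads
\[ d(S^{3}_K(p/q),i) \;=\; d(L(p,q),i)\;-\;2\max\bigl\{V_{\lfloor i/q\rfloor}(K),\, H_{\lceil (i-p)/q\rceil}(K)\bigr\}, \]
where $V_j,H_j\geq 0$ are integer invariants of $K$ extracted from $CFK^{\infty}(K)$ and obey the monotonicity $V_j-1\leq V_{j+1}\leq V_j$ (and similarly for $H_j$). The knot $K$ is an L-space knot precisely when the sequence $\{V_j\}$ realizes the extremal ``staircase'' pattern in which $V_{j+1}-V_j\in\{0,-1\}$ and $V_{g(K)}=0$.

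After replacing $K$ by its mirror if necessary, I would assume $r,r'>0$. Substituting the surgery formula into the $d$-invariant identification and using the known values of $d(L(p,q),i)$, one extracts a system of equalities of the form $V_i(K)=V_{\sigma(i)}(K)$ (or mixed identities $V_i(K)=H_{\sigma(i)}(K)$ in the chirally cosmetic case) for a prescribed bijection $\sigma$ of index sets determined by the Spin$^{c}$ matching. Combining these equalities with the strict monotonicity bounds on $V_j$ forces each $V_j$ to attain either its maximal or minimal allowed value at each step---this is exactly the staircase shape characterizing L-space knots.

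The main obstacle is to track the permutation $\sigma$ of Spin$^{c}$ structures induced by the homeomorphism: a priori it is an affine bijection of $H^{2}(\,\cdot\,;\Z)$ twisted by an element of $\Aut(H^{2})$, and in the chirally cosmetic case one must also incorporate the conjugation $\mathfrak{s}\mapsto\overline{\mathfrak{s}}$ arising from orientation reversal. Once this combinatorial bookkeeping is made explicit, the passage from the $d$-invariant identities to the staircase shape---and hence the L-space knot conclusion---follows by the standard monotonicity-and-symmetry argument of Ozsv\'ath--Szab\'o, subsequently refined by Ni--Wu.
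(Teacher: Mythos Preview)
The paper does not prove this theorem; it is quoted from \cite[Theorem~1.6]{OzSz2} and used as a black box, so there is no in-paper proof to compare your proposal against.

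Your proposal, however, contains a genuine gap. The characterization of L-space knots you state is false: the conditions $V_{j+1}-V_j\in\{0,-1\}$ and $V_{g(K)}=0$ hold for \emph{every} knot in $S^{3}$---the first is the standard monotonicity of the $V_j$, and the second follows from the vanishing of knot Floer homology above the Seifert genus. More fundamentally, the $d$-invariants (equivalently the sequence $\{V_j\}_{j\geq 0}$, which is all that enters the positive-surgery formula) are too weak to detect whether $K$ is an L-space knot. The figure-eight knot, for instance, has $\nu^{+}=0$ and hence $V_j=0$ for all $j\geq 0$, so every positive rational surgery on it has exactly the same $d$-invariants as the corresponding lens space; yet the figure-eight is not an L-space knot. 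No argument based solely on correction terms can therefore force the desired conclusion.

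The argument in \cite{OzSz2} uses more than correction terms: it compares the total rank of $\widehat{HF}$ (equivalently of $HF^{+}_{\mathrm{red}}$) of the two surgered manifolds. The rational surgery formula shows that for $p/q>0$ this rank equals $p$ plus a nonnegative term linear in $q$ whose coefficient depends only on $K$ and vanishes precisely when $K$ is an L-space knot. Equating the ranks for two distinct slopes $p/q\neq p/q'$ of the same sign then forces that coefficient to be zero.
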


Then we turn to the proof of Theorem \ref{theorem:main}.

\begin{lemma}
\label{lemma:Case1}
If $|npq-m|=0$ then 
$S^{3}_{K_{p,q}}(n\slash m) \not \cong -S^{3}_{K_{p,q}}(m\slash n')$.
\end{lemma}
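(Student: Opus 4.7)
The plan is to distinguish the two fillings by their prime decompositions: under $|npq-m|=0$ the filling $S^{3}_{K_{p,q}}(m/n)$ is reducible, while for any admissible partner slope $m/n'$ the filling $S^{3}_{K_{p,q}}(m/n')$ is irreducible, so no homeomorphism between them (with either orientation) can exist.

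First, from $|npq-m|=0$ and $\gcd(m,n)=1$ I would conclude $n=\pm 1$ and $m=\pm pq$, so Gordon's cabling formula recalled in the excerpt gives
\[
S^{3}_{K_{p,q}}(m/n) \;=\; S^{3}_{K}(p/q)\,\#\,L(q,p).
\]
Since $K$ is non-trivial, the Gordon--Luecke theorem ensures $S^{3}_{K}(p/q)\neq S^{3}$, and since $q>1$ the lens space $L(q,p)$ is also not $S^{3}$. Hence the left-hand side is a non-trivial connect sum, in particular reducible.

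Next, a chirally cosmetic pair requires $m/n\neq m/n'$, hence $n\neq n'$, so
\[
|n'pq - m| \;=\; |pq|\cdot|n'-n| \;\ge\; |pq| \;\ge\; 2,
\]
and Gordon's formula gives $S^{3}_{K_{p,q}}(m/n') = E(K)\cup_{T}P_{p,q,m,n'}$. The piece $E(K)$ is irreducible with incompressible boundary ($K$ being a non-trivial knot in $S^{3}$), and $P_{p,q,m,n'}$ is a Seifert fibered space over $D^{2}$ with two exceptional fibers of multiplicities $q>1$ and $|n'pq-m|>1$, so it too is irreducible with incompressible boundary. A standard innermost-disk transversality argument then shows that the union of two such pieces along the incompressible torus $T$ is irreducible, so $S^{3}_{K_{p,q}}(m/n')$ is irreducible.

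A reducible closed 3-manifold cannot be homeomorphic, in either orientation, to an irreducible one, contradicting $S^{3}_{K_{p,q}}(m/n)\cong -S^{3}_{K_{p,q}}(m/n')$ and finishing Case 1. I do not anticipate any real obstacle: once Gordon's cabling formula is invoked, the argument reduces to Gordon--Luecke and the standard description of when a Seifert fibered space over $D^{2}$ has incompressible boundary.
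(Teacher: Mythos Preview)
Your argument is correct and follows the same reducible-versus-irreducible dichotomy as the paper. The paper's proof is a single sentence: it asserts that $S^{3}_{K_{p,q}}(m/n')$ is irreducible whenever $n'pq-m\neq 0$ by citing Scharlemann's theorem on reducible surgeries. You instead observe that $|n'pq-m|=|pq|\,|n'-n|\geq 2$, so Gordon's formula already identifies the partner filling as $E(K)\cup_{T}P_{p,q,m,n'}$, and irreducibility then follows from the elementary fact that gluing two irreducible, boundary-incompressible pieces along an incompressible torus yields an irreducible manifold. This is a more self-contained route that sidesteps Scharlemann's deep result; the paper's citation is shorter to write but invokes far more machinery than the situation requires.
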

\begin{proof}
$S^{3}_{K_{p,q}}(m\slash n)=S^{3}_{K}(p\slash q) \# L(q,p)$ is reducible but   $S^{3}_{K_{p,q}}(m\slash n')$ is irreducible whenever $n'pq-m \neq 0$ \cite{Sc}. Hence they are not homeomorphic.
 \end{proof}

\begin{lemma}
\label{lemma:Case2}
If $|npq-m|=|n'pq-m|=1$ then 
$S^{3}_{K_{p,q}}(n\slash m) \not \cong -S^{3}_{K_{p,q}}(m\slash n')$.
\end{lemma}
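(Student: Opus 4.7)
The plan is to apply Gordon's formula to realize both cable surgeries as surgeries on the companion $K$, pin down the cable parameters from the arithmetic, and then combine Theorem~\ref{theorem:HF} with the Hedden--Hom characterization of cable L-space knots. Since $|npq-m|=|n'pq-m|=1$, both surgeries lie in the second case of Gordon's formula and equal
\[
S^3_{K_{p,q}}(m/n)=S^3_K(m/nq^2),\qquad S^3_{K_{p,q}}(m/n')=S^3_K(m/n'q^2).
\]
A chirally cosmetic pair requires distinct slopes, so $n\neq n'$. Subtracting the two hypotheses gives $(n-n')pq=\pm 2$, and combined with $q\geq 2$ and $|p|\geq 1$ this forces $q=2$, $|p|=1$, $|n-n'|=1$. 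Adding them yields $n+n'=m/p$, hence $nn'=(m^2-1)/4$ and $m$ is odd.

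Next I would verify that the two slopes on $K_{p,q}$ have positive product. When $|m|\geq 3$ we have $nn'>0$, so $(m/n)(m/n')>0$. The boundary case $|m|=1$ gives slope set $\{\infty,\pm 1\}$; one surgery is $S^3$, and the chirally cosmetic identification forces $S^3_K(\pm 1/4)\cong S^3$, which is impossible for non-trivial $K$ by Gordon--Luecke.

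Under the positive-product assumption, Theorem~\ref{theorem:HF} forces $K_{p,q}$ to be an L-space knot. The hard step is to extract a contradiction from this, and I would invoke the Hedden--Hom characterization of cable L-space knots: $K_{p,q}$ is an L-space knot iff $K$ is an L-space knot and $p/q\geq 2g(K)-1$. With $p=\pm 1$ and $q=2$ we have $p/q\leq 1/2$, whereas $2g(K)-1\geq 1$ for any non-trivial $K$; the inequality fails, so $K_{p,q}$ is not an L-space knot, contradicting the previous conclusion. Hence no chirally cosmetic pair arises in Case~2.
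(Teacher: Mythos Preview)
Your proof is correct and takes a genuinely different route from the paper's argument. Both proofs start identically: Gordon's formula and the arithmetic $|npq-m|=|n'pq-m|=1$ pin down $q=2$, $p=\pm1$, $|n-n'|=1$. From there the paths diverge. The paper applies Theorem~\ref{theorem:HF} to the \emph{companion} $K$ (via the induced chirally cosmetic pair $S^3_K(m/(2m+2))\cong -S^3_K(m/(2m-2))$), concludes $K$ is an L-space knot, invokes Proposition~\ref{proposition:a2} to get $a_2(K)\neq 0$, and then derives $a_2(K)=0$ from the Casson--Walker surgery formula. You instead apply Theorem~\ref{theorem:HF} directly to the \emph{cable} $K_{p,q}$ and finish with the Hedden--Hom cabling criterion, which immediately rules out $K_{\pm1,2}$ being an L-space knot once $K$ is non-trivial. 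Your argument is shorter and avoids the Dedekind-sum computation; the paper's argument stays within the invariants it has already set up and does not need the (deeper) Hedden--Hom input, though that result is in any case cited later in the paper. One small remark: Hom's theorem is stated for \emph{positive} L-space surgeries, so strictly speaking you should note that the mirror case is handled by the same inequality $|p/q|=1/2<1\le 2g(K)-1$; your numerics already cover this, so it is only a phrasing issue. Your explicit treatment of the boundary case $|m|=1$ via Gordon--Luecke is also a nice touch, as the paper's assertion that the two companion slopes have the same sign tacitly assumes $|m|>1$.
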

\begin{proof}
We may assume that $npq=m+1$ and $n'pq=m-1$ hence $(n-n')pq=2$.
Therefore we have $(p,q)=(\pm 1,2)$ and consequently
$2n=m+1$ and $2n'=m-1$, or, $-2n=m+1$ and $-2n'=m-1$.
We consider the former case $2n=m+1$ and $2n'=m-1$. The latter case is similar.

Since $S^{3}_{K_{p,q}}(m\slash n)= S^{3}_{K}(m \slash 4n) = S^{3}_{K}(m \slash 2m+2)$ and $S^{3}_{K_{p,q}}(m\slash n')= S^{3}_{K}(m \slash 4n') = S^{3}_{K}(m \slash 2m-2)$, we have a chirally cosmetic surgery on the knot $K$
\[ S^{3}_{K}(m \slash 2m+2) \cong -S^{3}_{K}(m \slash 2m-2).\] 
Since $(m \slash 2m+2)(m \slash 2m-2)>0$, i.e., the sign of two surgery slopes are the same, by Theorem \ref{theorem:HF} $K$ is an L-space knot. Hence $a_2(K)\neq 0$ by Proposition \ref{proposition:a2}.

On the other hand, by the surgery formula of Casson-Walker invariant $\lambda$ \cite{Walker}, we have
\begin{align*}
\lambda (S^{3}_{K_{p,q}}(m \slash 2m+2)) &= \frac{2m+2}{m}a_2(K)-\frac{1}{2}s(2m+2,m) \\
\lambda (-S^{3}_{K_{p,q}}(m \slash 2m+2)) &= -\frac{2m-2}{m}a_2(K)+\frac{1}{2}s(2m-2,m).
\end{align*}
here $s(a,b)$ denotes the Dedekind sum. 
Since the Dedekind sum has the properties
\[ s(a,b)=s(a',b) \mbox{ if } a\equiv a' \pmod b, \quad s(-a,b)=-s(a,b), \]
$s(2m+2,m)+s(2m-2,m)=0$.
Since $\lambda (S^{3}_{K_{p,q}}(m \slash 2m+2)) = \lambda (-S^{3}_{K_{p,q}}(m \slash 2m+2))$ we have 
\[ 8a_2(K) =s(2m+2,m)+s(2m-2,m)=0.\]
This is a contradiction.
\end{proof}

\begin{lemma}
\label{lemma:Case3}
If $|npq-m|=1$ and $|n'pq-m|>1$ then 
$S^{3}_{K_{p,q}}(n\slash m) \not \cong -S^{3}_{K_{p,q}}(m\slash n')$.
\end{lemma}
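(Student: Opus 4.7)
The plan is to compare the JSJ decompositions of the two sides. Suppose for contradiction that $S^{3}_{K_{p,q}}(m/n) \cong -S^{3}_{K_{p,q}}(m/n')$. By Gordon's cable surgery formula this gives a homeomorphism
\[
S^{3}_K(m/nq^{2}) \;\cong\; -\bigl(E(K) \cup_{T} P\bigr),
\]
where $T=\partial E(K)$ and $P := P_{p,q,m,n'}$. Let $X_{1},\ldots,X_{k}$ denote the JSJ pieces of $E(K)$ with $X_{1}$ the piece containing $\partial E(K)$. Since $P$ is a JSJ piece of the right-hand side, $T$ is JSJ-essential there, and the right-hand side has $k+1$ JSJ pieces: $X_{1},\ldots,X_{k}$ together with the Seifert piece $P$. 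I would then split into two cases according to the geometry of $X_{1}$.

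If $X_{1}$ is hyperbolic, I would use the Gromov norm $\|\cdot\|$; this is the main step, because a naive JSJ piece count is not enough, as Dehn filling of a hyperbolic piece can in principle create new essential tori. By additivity of $\|\cdot\|$ across essential tori and its vanishing on the Seifert piece $P$,
\[
\|E(K) \cup_{T} P\| \;=\; \|E(K)\| \;=\; \|X_{1}\| + \sum_{i \ge 2}\|X_{i}\|.
\]
On the other hand, $S^{3}_K(m/nq^{2}) = E(K) \cup_{\partial E(K)} V$ is a Dehn filling on the hyperbolic piece $X_{1}$, so Thurston's strict volume monotonicity for Dehn fillings of cusped hyperbolic 3-manifolds gives $\|X_{1} \cup V\| < \|X_{1}\|$; sub-additivity of $\|\cdot\|$ along tori then yields
\[
\|S^{3}_K(m/nq^{2})\| \;\le\; \|X_{1} \cup V\| + \sum_{i \ge 2}\|X_{i}\| \;<\; \|E(K)\|,
\]
contradicting the orientation-invariance of the Gromov norm.

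If $X_{1}$ is Seifert fibered, I would count JSJ pieces instead. The slope $m/nq^{2}$ cannot equal the Seifert fiber slope of $X_{1}$ on $\partial E(K)$: otherwise the left-hand side would be reducible, contradicting the irreducibility of $E(K) \cup_{T} P$ (glued from the irreducible pieces $E(K)$ and $P$ along the essential torus $T$). Hence $X_{1} \cup V$ is a single Seifert-fibered JSJ piece, and its fiber slope on each JSJ torus adjacent to $X_{1}$ is unchanged from $X_{1}$'s (the filling is on a different boundary), so the incompatibility of Seifert fiberings that made each such torus JSJ-essential in $E(K)$ persists, and no merging with neighboring pieces occurs. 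The left-hand side therefore has exactly $k$ JSJ pieces, one fewer than the right-hand side, a contradiction.
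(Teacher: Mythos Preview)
Your argument is correct and follows essentially the same route as the paper's own proof: both split on whether the outermost JSJ piece $X_{1}$ of $E(K)$ is hyperbolic or Seifert fibered, invoke the strict decrease of the Gromov (simplicial) norm under Dehn filling in the hyperbolic case, and compare the number of JSJ pieces/tori in the Seifert case. The only quibble is your claim of \emph{exactly} $k$ pieces on the filled side: if the non-fiber filling of $X_{1}$ happens to produce a solid torus, the adjacent JSJ torus becomes compressible and merging with the neighbouring piece does occur---but this only lowers the count, so the paper's weaker assertion ``at most $k$'' (which is all that is needed to contradict the $k+1$ pieces on the other side) already suffices.
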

\begin{proof}
Let $k$ be the number of JSJ tori of $E(K)$, and let $X_0$ be the JSJ piece of $E(K)$ that contains $\partial E(K)$.
When $X_0$ is hyperbolic, the simplicial volume of its exterior satisfies $||E(K)||\geq ||E(X_0)||>0$. Since the simplicial volume strictly decreases under Dehn fillings when it is non-zero,
\[ ||S^{3}_{K_{p,q}}(m\slash n)|| = ||S^{3}_{K}(m\slash 4n)|| < || E(K) ||\]
On the other hand,
\[ ||S^{3}_{K_{p,q}}(m\slash n')||= ||E(K) \cup_{T^{2}} P_{p,q,m,n'}|| =||E(K)||\]
we conclude $S^{3}_{K_{p,q}}(m\slash n) \neq -S^{3}_{K_{p,q}}(m \slash n')$.

When $X_0$ is Seifert fibered, $S^{3}_{K_{p,q}}(m\slash n) = S^{3}_{K}(m\slash nq^{2})$ has at most $k$ essential tori whereas $-S^{3}_{K_{p,q}}(m\slash n')$ contains $(k+1)$ essential tori so they are not homeomorphic.
\end{proof}

To treat Case 4, we give a more precise description of the Seifert fibered piece $P_{p,q,m,n}$ and how $E(K)$ is attached to $P_{p,q,m,n}$. 

In the following we use Hatcher's notation $M(g,b;\frac{\alpha_1}{\beta_1},\ldots,\frac{\alpha_n}{\beta_n})$ for Seifert fibered manifold \cite{Hatcher}. For a compact oriented surface with genus $g$ and $b$ boundary components $B$, let $B':= B \setminus (D_1\cup\cdots \cup D_n)$ where $D_1,\ldots,D_n \subset \textrm{Int}(B)$ are disjoint disks. Let $\pi: M' \rightarrow B'$ be the circle bundle over $B'$ with orientable total space. By taking a cross section $\sigma: B' \rightarrow M$ we identify the total space $M'$ with $\sigma(B') \times S^{1}=B'\times S^{1}$. For each torus boundary component $T$ of $M'$ we have a canonical ordered basis given by $([c_T]:= [ B'\times \{*\}\cap T ],[h]:=[\{*\}\times S^{1}\})$ which we call a \emph{section-regular fiber basis}. $M(g,b;\frac{\alpha_1}{\beta_1},\ldots,\frac{\alpha_n}{\beta_n})$ is a 3-manifold obtained by attaching $n$ tori along each torus boundary $T_i:=\partial D_i' \times S^{1}$ so that the slope $\frac{\alpha_i}{\beta_i}$ bounds a disk.

Let $C=C_{p,q}:= (S^{1} \times D^{2}) \setminus N(T_{p,q})$ be the cable space, the complement of the regular neighborhood of the $(p,q)$ torus knot $T_{p,q}$ in a solid torus $S^{1} \times D^{2}$. We fix integers $s,r$ so that $pr+qs=1$. With a suitable choice of section the cable space $C_{p,q}$ is identified with $ M(0,2;\frac{r}{q})$.

Besides a section-regular fiber basis, the boundaries of $C_{p,q}$ has another natural ordered basis. Let $\partial_1 C :=\partial N(T_{p,q})$. By viewing $T_{p,q}$ as usual $(p,q)$ torus knot lying in $S^{1}\times D^{2} \subset S^{3}$, we have the standard meridian-longitude basis $(\mu,\lambda)$ of $H_1(\partial_1 C;\Z)$. 
In terms of the meridian-longitude basis, the section-regular fiber basis $([c_1],[h])$ is written by 
\[ [c_1]=-[\mu], \ [h]=pq[\mu]+[\lambda] \in H_{1}(\partial_1 C;\Z).\]
Since $m[\mu]+n[\lambda]=(npq-m)[c_1]+n[h]$, we have an identification
\[ P_{p,q,m,n}=M(0,1;\frac{r}{q},\frac{n}{npq-m})\] 

For $\partial_2 C := \partial (S^{1} \times D^{2})$ we have a natural basis $([M]=[\{*\}\times \partial D^{1}],[L]=[S^{1}\times\{*\}])$ of $H_1(\partial_2 C;\Z)$ which we call \emph{outer torus basis}.
In terms of the section-regular fiber basis $([c_2],[h])$, the outer torus basis $([M],[L])$ is written by 
\[ [M]=q[c_2]-r[h], \ [L]=p[c_2] +s[h]  \in H_{1}(\partial_2 C;\Z).\]
By definition of cabling, the exterior $E(K)$ is glued to $P_{p,q,m,n}$ by the homeomorphism $\varphi: \partial E(K) \rightarrow \partial P_{p,q,m,n}$ such that $\varphi(\mu_{K})=[M]$ and $\varphi(\lambda_K)=[L]$.

\begin{lemma}
\label{lemma:Case4}
Assume that one of the following conditions are satisfied.
\begin{itemize}
\item[(i)] $q\neq 2$.
\item[(ii)] $q=2$, $p\neq \pm 1$, and the JSJ piece of $E(K)$ does not contain $(-p,2)$-torus knot exterior. 
\item[(iii)] $q=2$, $p=\pm 1$ and the JSJ piece of $E(K)$ does not contain $(r,2)$-torus knot exterior for any $r$.
\item[(iv)] $q=2$, $p=\pm 1$ and $a_2(K)\neq 0$.
\end{itemize} 
Then for $|npq-m|,|n'pq-m|>1$ with $n\neq n'$, 
$S^{3}_{K_{p,q}}(m/n) \not \cong -S^{3}_{K_{p,q}}(m/n')$.
\end{lemma}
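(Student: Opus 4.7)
The plan is to argue by contradiction via JSJ rigidity, with a Casson--Walker fallback for case (iv). Assume an orientation-preserving homeomorphism $f\colon S^{3}_{K_{p,q}}(m/n)\to -S^{3}_{K_{p,q}}(m/n')$ exists. Under $|npq-m|,|n'pq-m|>1$, both manifolds decompose as $E(K)\cup_{T}P_{p,q,m,n}$ and $E(K)\cup_{T}P_{p,q,m,n'}$. The torus $T=\partial E(K)$ is a JSJ torus of each: the regular fiber of $P_{p,q,m,n}$ on $T$ has slope $-p/q$ in the meridian--longitude basis of $K$, non-integral since $q>1$, whereas the regular fiber of any Seifert-fibered JSJ piece of $E(K)$ adjacent to $T$ has integer slope on $T$, so the fibrations cannot amalgamate.

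By Jaco--Shalen--Johannson uniqueness, $f$ induces an isomorphism of the JSJ graphs of manifolds. Since $P_{p,q,m,n}$ is a single-boundary Seifert-fibered piece with multiplicities $(q,|npq-m|)$, its image is either $P_{p,q,m,n'}$ or a torus-knot exterior $E(T_{a,b})$ occurring as a JSJ piece of $E(K)$. The latter requires $\{a,b\}=\{q,|npq-m|\}$ together with a compatible boundary parametrization; hypotheses (ii) and (iii) exclude precisely the torus-knot pieces that could arise, while case (i) with $q\neq 2$ rules it out by tracing the fiber direction $[h]$ through the gluing $[\mu_K]=q[c_2]-r[h]$, $[\lambda_K]=p[c_2]+s[h]$. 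Hence $f(P_{p,q,m,n})=P_{p,q,m,n'}$ and $f(T)=T$.

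Consequently $f$ restricts to an orientation-reversing homeomorphism of the two Seifert pieces respecting the parametrization of $T$ by $(\mu_K,\lambda_K)$. Comparing Seifert invariants of $M(0,1;r/q,n/(npq-m))$ against $-M(0,1;r/q,n'/(n'pq-m))$, the orientation flip sends $r/q$ to $(q-r)/q$, and $(q-r)/q\equiv r/q\pmod 1$ demands $q\mid 2r$, forcing $q=2$, excluded in (i). The remaining branch gives $|npq-m|=|n'pq-m|$ with the same sign, hence $n=n'$, the desired contradiction.

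For case (iv), $q=2$ and $p=\pm 1$, the piece $P_{\pm 1,2,m,n}$ is itself a $(|m\mp 2n|,2)$-torus knot exterior admitting orientation-reversing self-homeomorphisms; the rigidity argument then only yields $|2n\mp m|=|2n'\mp m|$, leaving the possibility $p(n+n')=-m$. Combining with Walker's surgery formula and the cabling identity $a_{2}(K_{\pm 1,2})=4a_{2}(K)$, the equality $\lambda(S^{3}_{K_{p,q}}(m/n))+\lambda(S^{3}_{K_{p,q}}(m/n'))=0$ reduces to $\frac{4(n+n')}{m}a_{2}(K)=\frac{1}{2}(s(n,m)+s(n',m))$; substituting $n'=-m/p-n$ and using $s(-a,b)=-s(a,b)$ together with $s(a,b)=s(a',b)$ for $a\equiv a'\pmod b$ makes the right-hand side vanish, forcing $a_{2}(K)=0$ and contradicting (iv). The principal obstacle is the rigidity step in cases (ii)--(iii): excluding a ``torus-knot-in-$E(K)$'' identification of $f(P_{p,q,m,n})$ requires careful simultaneous tracking of Seifert invariants, JSJ graph structure, and the cabling-induced boundary parametrization.
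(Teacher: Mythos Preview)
Your outline inverts the paper's logic and leaves the central step unjustified. You argue: hypotheses (i)--(iii) prevent $f(P_{p,q,m,n})$ from landing on a torus-knot-exterior JSJ piece of $-E(K)$, hence $f(P_{p,q,m,n})=-P_{p,q,m,n'}$, and then a Seifert-invariant comparison yields a contradiction. The paper does the opposite. Its Claim~1 shows \emph{unconditionally} (using the boundary parametrization forced by the gluing to $E(K)$, not just abstract Seifert data) that $f(P_{p,q,m,n})\neq -P_{p,q,m,n'}$. Thus $f(P_{p,q,m,n})$ \emph{must} be a piece $-X_1$ of $-E(K)$, giving a JSJ piece $X_1\cong -P_{p,q,m,n}$ inside $E(K)$. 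One then asks where $f(X_1)$ goes, and iterates: Claims~A and~B show that a piece of $E(K)$ homeomorphic to $\mp P_{p,q,m,n}$ can map to $-P_{p,q,m,n'}$ only when $q=2$ and the piece is a specific $(-p,2)$- or $(\pm(n-n'),2)$-torus-knot exterior (and, in Claim~B, forces $p=\pm1$ and $a_2(K)=0$). It is this \emph{pair} of constraints---$X\cong \mp P_{p,q,m,n}$ \emph{and} $f(X)=-P_{p,q,m,n'}$---that pins the piece down to $E(T_{-p,2})$; hypotheses (i)--(iv) are then exactly what rule this out. The iteration produces an infinite chain of distinct JSJ pieces, the contradiction.

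Your gap is precisely the sentence ``hypotheses (ii) and (iii) exclude precisely the torus-knot pieces that could arise.'' They do not: if $f(P_{p,q,m,n})$ lands on $-X$ with $X$ a torus-knot exterior inside $E(K)$, all you know a priori is $\{|a|,|b|\}=\{q,|npq-m|\}$. Hypothesis (ii) only excludes $E(T_{-p,2})$, not $E(T_{|2np-m|,2})$; the identification $|2np-m|=|p|$ is not automatic and in the paper emerges only after simultaneously imposing $X\cong -P_{p,q,m,n}$ \emph{and} $X\cong -P_{p,q,m,n'}$ (i.e.\ after assuming $f(X)=-P_{p,q,m,n'}$), which is a different hypothesis from the one you are analysing. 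Likewise your case (i) claim that ``$q\neq2$ rules it out by tracing the fiber direction'' is asserted, not argued; nothing prevents $E(K)$ from containing a $(q,|npq-m|)$-torus-knot exterior for $q\ge3$. You acknowledge this obstacle in your final sentence, but the body of the proposal treats it as already handled. Your Casson--Walker computation for (iv) is close to the paper's Claim~B, but it too is invoked at the wrong stage: in the paper $a_2(K)=0$ is extracted from a piece of $E(K)$ mapping to $-P_{p,q,m,n'}$, not from $P_{p,q,m,n}$ itself.
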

\begin{proof}
Assume, to the contrary that $S^{3}_{K_{p,q}}(m/n) \cong -S^{3}_{K_{p,q}}(m/n')$ so there is an orientation preserving homeomorphism $f: S^{3}_{K_{p,q}}(m/n) \rightarrow -S^{3}_{K_{p,q}}(m/n')$. 

By isotopy we assume that $f$ induces homeomorphisms of JSJ pieces.
By the assumption $|npq-m|,|n'pq-m|>1$, $S^{3}_{K_{p,q}}(m/n)$ and $-S^{3}_{K_{p,q}}(m/n')$ have distinguished JSJ piece $P_{p,q,m,n}$ and $-P_{p,q,m,n'}$. Let $X_0=P_{p,q,m,n}$, and $Y_0=f(X_0)$.

\begin{claim}
\label{claim:1}
$Y_0 \neq -P_{p,q,m,n'}$.
\end{claim}

\begin{proof}
Assume to the contrary that $Y_0=-P_{p,q,m,n'}$ so $f$ induces an orientation reversing homeomorphism $f|_P=f|_{P_{p,q,m,n}}: P_{p,q,m,n} \rightarrow P_{p,q,m,n'}$. By uniqueness of Seifert fibration, $f_P$ sends the regular fiber $h$ of $P_{p,q,m,n}$ to the regular fiber $h'$ of $P_{p,q,m,n}$.
Since $f|_P$ is orientation reversing, it inverts the orientation of regular fiber hence we have $f([h])=-[h']$. 

On the other hand, $f_{E(K)}$ induces an orientation reversing homeomorphism $f|_{E(K)}: E(K) \rightarrow E(K)$ hence $K$ is amphichieral.
In particular, we have $f([\mu_K])=-[\mu_K], f([\lambda_K])=[\lambda_K]$.

As we have discussed, in $S^{3}_{K_{p,q}}(m/n)$, the outer torus basis $([M], [L])$ of $P_{p,q,m,n}$ are identified with $[\mu_K]$ and $[\lambda_K]$, respectively. Similarly, in $S^{3}_{K_{p,q}}(m/n')$, the outer torus basis $([M'],[L'])$ of $P_{p,q,m,n'}$ are identified with $[\mu_K]$ and $[\lambda_K]$, respectively. Therefore $f([M])=-[M']$ and $f([L])=[L']$ hence
in terms of the section-regular fiber basis $([c_2],[h])$ and $([c'_2],[h'])$ of $P_{p,q,m,n}$ and $P_{p,q,m,n'}$, we have  
\begin{align*}
f ([M]) & = f (q[c_2]-r[h])=q f([c_2])+r[h'] = -q [c_2']+r[h']=-[M'],\\
f ([L]) & = f (p[c_2]+s[h])=p f([c_2])-s[h'] = p[c'_2]+s[h']=[L'].
\end{align*}
The first equation shows $f([c_2])=-[c'_2]$, which contradicts with the second equation.
\end{proof}

Thus $Y_0=f(X_0)\cong P_{p,q,m,n}$ is a JSJ piece $-E(K)$. Hence there exists a JSJ piece $X_1$ of $E(K)=-(-E(K))$ which is homeomoprhic to $-Y_0 \cong -P_{p,q,m,n}$.

The next claim, together with our assumption (ii), shows that such a JSJ piece cannot be send to $-P_{p,q,m,n'}$.

\begin{claim}
\label{claim:A}
Let $X$ be a JSJ piece of $E(K)$ which is homeomorphic to $-P_{p,q,m,n}$.
If $f(X) =-P_{p,q,m,n'}$, then $q=2$ and $X$ is homeomoprhic to $(-p,2)$-torus knot exterior. Moreover, $K_{p,q}$ is an L-space knot.
\end{claim}

\begin{proof}[Proof of Claim \ref{claim:A}]
Since $-X\cong P_{p,q,m,n}$ is a Seifert fibered space with disk base and two singular fibers that appears as a JSJ piece of the knot exterior $E(K)$, $X$ is homeomorphic to the torus knot exterior $E(T_{P,Q})$ for some $P,Q$.
We fix integers $S,R$ so that $PR+QS=1$. If $f(X)=-P_{p,q,m,n'}$,
\begin{align*}
-X \cong E(T_{P,Q}) \cong M(0,1;\frac{R}{Q}, \frac{S}{P})  & \cong M(0,1;\frac{r}{q}, \frac{n}{npq-m}) \cong P_{p,q,m,n} \\
 &\cong M(0,1; \frac{r}{q}, \frac{n'}{n'pq-m}) \cong P_{p,q,m,n'} 
\end{align*} 
Thus we may assume that we have $q=Q$, $r=R$, $P=npq-m=-(n'pq-m)$ and that there are integers $i,j$ such that 
\[ \frac{n}{npq-m}+i =\frac{S}{P}, \qquad \frac{-n'}{npq-m}+j =\frac{S}{P}.\] 
In particular we have
\begin{equation}
\label{eqn:A1}
\begin{cases}
r(npq-m)+qn+qi(npq-m)=1, \\
r(npq-m)-qn'+qj(npq-m)=1. \\
\end{cases}
\end{equation}
Since $P=npq-m=-(n'pq-m)$, we have $(n+n')pq=2m$. By (\ref{eqn:A1}), $q$ and $m$ are coprime hence we have $q=2$. Consequently, we get $r=R=1$, $q=Q=2$, and 
\[ P= npq-m = 2np - (n+n')p= (n-n')p. \]
Then (\ref{eqn:A1}) is written by
\[
%\label{eqn:A2}
\begin{cases}
(n-n')p+2n+2i(n-n')p=1, \\
(n-n')p-2n'+2j(n-n')p=1. \\
\end{cases}\]
So we have $(n-n')(1+p+ip+jp)=1$ hence $(n-n')=\pm 1$. 
If $(n-n')=1$, we have $P=(n-n')p=p$ so $X \cong -P_{p,q,m,n} \cong -E(T_{P,2}) \cong E(T_{-p,2})$. Moreover, $n-n'=1$ means that the signs of surgery slopes $m/n$ and $m/n'$ are the same hence $K_{p,q}$ is an L-space knot by Theorem \ref{theorem:HF}. 

If $(n-n')=-1$, we have $p(1+i+j)=-2$ so $p=\pm 1$. Then we have $|npq-m|=|(n-n')p|=1$ so it contradicts the assumption.

\end{proof}
 
Thus $Y_1=f(X_1)$ is a JSJ piece of $-E(K)$. Hence we have a JSJ piece $X_2$ of $E(K)$ which is homeomoprhic to $P_{p,q,m,n}$.
The next claim, similar to Claim \ref{claim:A}, together with the assumptions (iii) and (iv) shows that such a JSJ piece cannot be send to $-P_{p,q,m,n'}$, either.

\begin{claim}
\label{claim:B}
Let $X$ be a JSJ piece of $E(K)$ which is homeomorphic to $P_{p,q,m,n}$. If $f(X)= -P_{p,q,m,n'}$ then $q=2$, $p = \pm1 $ and $a_{2}(K)=0$. Moreover $X$ is homeomoprhic to $(2,\pm(n-n'))$-torus knot exterior.
\end{claim}
\begin{proof}[Proof of Claim \ref{claim:B}]
As in Claim \ref{claim:A}, $X$ is homeomorphic to the torus knot exterior $E(T_{P,Q})$ for some coprime $P,Q$ and we have 
\begin{align*}
X \cong E(T_{P,Q}) \cong M(0,1;\frac{R}{Q}, \frac{S}{P}) & \cong M(0,1;\frac{r}{q}, \frac{n}{npq-m}) \cong P_{p,q,m,n} \\
 &\cong M(0,1; -\frac{r}{q}, -\frac{n'}{n'pq-m}) \cong -P_{p,q,m,n'}.
\end{align*} 
Here $S,R$ are integers chosen so that $PR+QS=1$. We may assume that $q=Q$, $r=R$, $P=npq-m$.

We have either $|npq-m|=|n'pq-m|$ or $|npq-m|=|q|$. In the latter case we also have $|n'pq-m|=|q|=|n'pq-m|$ so in both cases we always have $|npq-m|=|n'pq-m|$. Since $n\neq n'$ we have $npq-m=-n'pq+m$ so $(n+n')pq=2m$.

On the other hand, there is an integer $i$ such that $\frac{n}{npq-m}+i =\frac{S}{P}$ so we have $r(npq-m)+qn+qi(npq-m)=1$. This implies that $m$ and $q$ are coprime so we have $q=Q=2$. Consequently, $(n+n')p=m$ hence $npq-m=2np-(n+n')p=(n-n')p$.

By comparing Seifert invariants, we have integers $i,j$ such that 
\[
\begin{cases}
(n-n')p+2n+2i(n-n')p=1, \\
(n-n')p+2n'+2j(n-n')p=1 \\
\end{cases}
\]
so we have $(n-n')(ip-jp+1)=0$. 
Consequently, we have $(i-j)p=-1$ so $p=\pm 1$. Thus $P=npq-m=(n-n')p=\pm (n-n')$.

Also, by $p=\pm 1$ we have $n+n' = \pm m$. This shows that $n+n' \equiv 0 \pmod m$. By the Casson-Walker invariant we have
\begin{align*}
 \lambda(S^{3}_{K_{\pm 1,2}}(m\slash n)) & = \frac{n}{m}a_2(K_{\pm 1,2})-\frac{1}{2}s(n,m)\\
& = -\frac{n'}{m}a_2(K_{\pm 1,2})+\frac{1}{2}s(n',m) = \lambda(-S^{3}_{K_{\pm 1,2}}(m\slash n'))
\end{align*}
we have
\[ \frac{n+n'}{m}a_2(K_{\pm 1,2}) = \frac{1}{2}(s(n,m)+s(n',m))=0\]
Since $n+n' \neq 0$ because this implies $m=0$, we have $a_{2}(K_{p,q})=0$.
On the other hand, since $\Delta_{K_{p,q}}(t)=\Delta_{K}(t^{q})\Delta_{T_{p,q}}(t)$ we have $a_{2}(K_{p,q})=q^{2}a_2(K)+a_2(T_{p,q})$.
Thus $a_2(K_{\pm 1,2})=4a_{2}(K)=0$.

\end{proof}

Therefore $Y_2=f(X_2)$ appears as a JSJ piece of $-E(K)$ hence we have a JSJ piece $X_3$ of $E(K)$ which is homeomoprhic to $-P_{p,q,m,n}$.

Then we repeat the argument; for each $i>2$, we have a JSJ piece $X_i$ which is homeomorphic to $-P_{p,q,m,n}$ (if $i$ is odd) or $P_{p,q,m,n}$ (if $i$ is even).
Then by assumption of lemma and Claim \ref{claim:A} (if $i$ is odd) or Claim \ref{claim:B} (if $i$ is even), we see that $f(X_i) \neq -P_{p,q,m,n'}$ hence 
 $Y_i=f(X_i)$ gives a new JSJ piece of $-E(K)$.
This means that we find a new JSJ piece $X_{i+1}$ in $E(K)$, homeomorphic to $-P_{p,q,m,n}$ (if $i$ is even) or $P_{p,q,m,n}$ (if $i$ is odd) (see Figure \ref{fig:chart} below for a schematic illustration). Thus $E(K)$ contains infinitely many JSJ piece, which is absurd. 

\begin{figure}[htbp]
\includegraphics*[width=120mm,bb= 124 541 506 721]{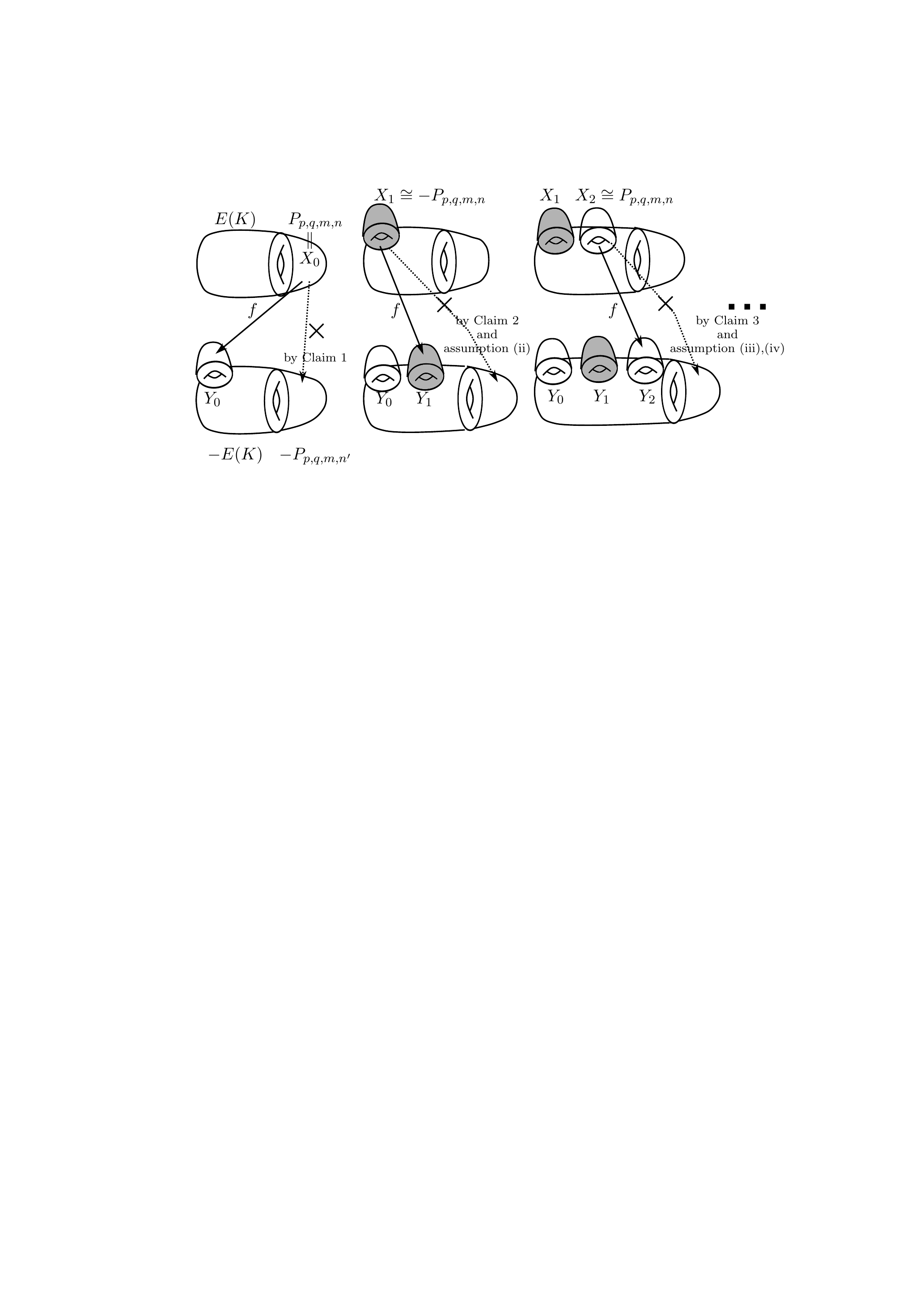}
\caption{Proof of Lemma 4: $S^{3}_{K_{p,q}}(n\slash m) \not \cong -S^{3}_{K_{p,q}}(m\slash n')$ imposes infinitely many JSJ pieces.} 
\label{fig:chart}
\end{figure} 

\end{proof}

\section{Iterated cables}

For a sequence $(p_1,q_1),\ldots,(p_N,q_N)$ of coprime integers with $q_i>1$ and a knot $K$, we define an iterated cable $K_{(p_1,q_1),\ldots,(p_N,q_N)}$ inductively by
\[
K_{(p_1,q_1)}=K_{p_1,q_1}, \quad
K_{((p_1,q_1),\ldots,(p_N,q_N))}= (K_{((p_1,q_1),\ldots,(p_{N-1},q_{N-1}))})_{(p_N,q_N)}.
\]
When $K$ is the unknot $U$, the iterated cable $U_{(p_1,q_1),\ldots,(p_N,q_N)}$ is called the \emph{iterated torus knot}.

We prove a theorem which is slightly general than Theorem \ref{theorem:main2}, by adding more arguments to Lemma \ref{lemma:Case4}.

\begin{theorem}
\label{theorem:main2-general}
Let $K$ be a non-satellite knot. Then an iterated cable $K_{(p_1,q_1),\ldots,(p_N,q_N)}$ for $N\geq 1$ does not admit chirally cosmetic surgery.
\end{theorem}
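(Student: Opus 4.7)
The plan is to view $L := K_{(p_1,q_1),\ldots,(p_N,q_N)}$ as the $(p_N,q_N)$-cable of $K' := K_{(p_1,q_1),\ldots,(p_{N-1},q_{N-1})}$, apply Theorem \ref{theorem:main}, and add arguments to Lemma \ref{lemma:Case4} to handle the residual cases. Lemmas \ref{lemma:Case1}--\ref{lemma:Case3} transfer essentially verbatim, so the real work is in Case 4, i.e.\ $|np_Nq_N-m|,|n'p_Nq_N-m|>1$.

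The key structural input I will exploit is that, since $K$ is non-satellite, the JSJ graph of $E(K')$ is a linear path whose innermost vertex is an \emph{atomic} piece $A$ --- either $E(K)$ when $K$ is non-trivial, or $E(T_{p_1,q_1})$ when $K$ is the unknot and $N\geq 2$ --- and whose remaining vertices are the cable spaces $C_{p_i,q_i}$. Each cable space has two boundary tori, so it is never homeomorphic to a torus knot exterior or to the Seifert piece $P_{p_N,q_N,m,n}$ (each of which has one boundary torus); hence $E(K')$ contains at most one torus knot exterior as a JSJ piece, namely $A$ itself. Three easy sub-cases of Theorem \ref{theorem:main} follow: if $q_N\neq 2$ then (i) applies; if $A$ is hyperbolic (equivalently $K$ is hyperbolic) then (ii)/(iii) hold vacuously; and if $A=E(T_{\alpha,\beta})$ with $\beta\neq 2$ (up to swapping) then (ii) or (iii) still applies.

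The main obstacle is the residual case $q_N=2$ with $A=E(T_{\alpha,2})$. Here I extend the chain argument of Lemma \ref{lemma:Case4} via the path structure. The JSJ graphs of both $S^3_L(m/n)$ and $-S^3_L(m/n')$ are paths of equal length, with a distinguished Seifert piece at one end and the atomic piece at the other. A putative orientation-reversing homeomorphism $f$ induces a path automorphism which by Claim \ref{claim:1} cannot be the identity, hence is the reflection. Reflection forces $f(A)=-P_{p_N,q_N,m,n'}$, $f(P_{p_N,q_N,m,n})=-A$, a palindromic pairing $f(C_{p_i,q_i})=-C_{p_{N-i},q_{N-i}}$ of intermediate cable spaces, and consequently $P_{p_N,q_N,m,n}\cong -A\cong P_{p_N,q_N,m,n'}$ as oriented Seifert fibered spaces. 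Applying Claims \ref{claim:A} and \ref{claim:B} to $X=A$ leaves two possibilities: either $A\cong E(T_{-p_N,2})$ with $L$ an L-space knot (Claim A), or $p_N=\pm 1$ together with $a_2(K')=0$ (Claim B).

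The Claim B conclusion is killed directly: iterating $a_2(K_{p,q})=q^2 a_2(K)+a_2(T_{p,q})$ produces a closed form for $a_2(K')$ in which every summand is non-negative and at least one is strictly positive (using $a_2(T_{p,q})=(p^2-1)(q^2-1)/24>0$ when $|p|,|q|\geq 2$, together with $K'$ being a non-trivial iterated cable), so $a_2(K')>0$. The Claim A conclusion is trickier and is where the bulk of the new work goes: it must be ruled out by combining the Seifert-invariant identifications $P\cong -A\cong P'$ (which pin down $m=p_N(n+n')$ and $n-n'=1$), the palindromic cable-space matching, Proposition \ref{proposition:a2}, and the Casson--Walker formula of Lemma \ref{lemma:Case2}, reducing each configuration either to an arithmetic incompatibility via Dedekind reciprocity or to a chirally cosmetic surgery on an iterated torus knot already excluded by Theorem \ref{theorem:main2}. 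The sole edge case $N=1$ with $K$ the unknot, where $L$ is itself a torus knot, is directly subsumed by Theorem \ref{theorem:main2}.
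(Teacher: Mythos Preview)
Your overall architecture matches the paper: write $L=K'_{(p_N,q_N)}$, use the linear JSJ graph of $E(K')$, observe that only the innermost piece $A$ has a single boundary torus and hence can correspond to $P_{p_N,q_N,m,n}$, and invoke Claim~\ref{claim:1} to force the reflection, so that $f(P_{p_N,q_N,m,n})=-A$ and $f(A)=-P_{p_N,q_N,m,n'}$. Up to here you are fine, and this is exactly what the paper does.

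The gap is in how you finish. From $f(P_{p_N,q_N,m,n})=-A$ you already know $A\cong -P_{p_N,q_N,m,n}$ as oriented manifolds, so only Claim~\ref{claim:A} is relevant to $X=A$; there is no separate ``Claim~B branch'' here. Claim~\ref{claim:A} then gives $q_N=2$, $A\cong E(T_{-p_N,2})$ (hence $(p_1,q_1)=(-p_N,2)$), and that $L$ is an $L$-space knot. At this point you wave at ``Seifert-invariant identifications, palindromic cable-space matching, Proposition~\ref{proposition:a2}, the Casson--Walker formula, Dedekind reciprocity, or reduction to Theorem~\ref{theorem:main2}''. None of this is an argument: the Casson--Walker/Dedekind computation of Lemma~\ref{lemma:Case2} used $n+n'\equiv 0\pmod m$, which you have not established in this situation (Claim~A gives $n-n'=1$, not a congruence on $n+n'$), the palindromic matching of cable spaces does not by itself produce a numerical obstruction, and the appeal to Theorem~\ref{theorem:main2} is circular since in this paper Theorem~\ref{theorem:main2} is the special case $K=U$ of the very theorem you are proving.

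What actually closes the argument is a fact you never mention: Hom's cabling criterion for $L$-space knots \cite{Hom}. It implies that for an iterated torus knot $U_{(p_1,q_1),\ldots,(p_N,q_N)}$ to be an $L$-space knot, all the $p_i$ must share the same sign. Combined with $p_1=-p_N$ from Claim~\ref{claim:A}, this is an immediate contradiction. Replace your last paragraph with this one-line application of \cite{Hom} and the proof is complete; without it, the Claim~A case remains open.
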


\begin{proof}[Proof of Theorem \ref{theorem:main2-general}]
An iterated cable of torus knot is an iterated torus knot so we may assume that $K$ is either hyperbolic or unknot.
We put $p=p_N$, $q=q_N$ and view the iterated cable $K_{(p_1,q_1),\ldots,(p_N,q_N)}$ as $K^{*}_{(p,q)}$, the $(p,q)$-cable of the iterated cable $K^{*}=K_{(p_1,q_1),\ldots,(p_{N-1},q_{N-1})}$.

The JSJ decomposition of $E(K^{*})$ is given by 
\[ E(K^{*})= E(T_{p_1,q_1}) \cup_{T_1} C_{p_2,q_2} \cup_{T_2} \cdots \cup_{T_{N-2}}C_{p_{N-1},q_{N-1}} \]
if $K$ is unknot, and 
\[ E(K^{*}) = E(K) \cup_{T_0}  C_{p_1,q_1}\cup_{T_1} C_{p_2,q_2} \cup_{T_2} \cdots \cup_{T_{N-2}}C_{p_{N-1},q_{N-1}} \]
otherwise (i.e., $K$ is hyperbolic).
When $K$ is hyperbolic, no JSJ piece of $E_{K^{*}}$ is homeomorphic to the torus knot exterior so by Theorem \ref{theorem:main}, $K^{*}_{p,q}$ does not admit chirally cosmetic surgery. Thus we assume that $K^{*}$ is an iterated torus knot. Since the classification of chirally cosmetic surgery of torus knots are known \cite{IIS,Rong}, in the following we assume that $K$ is not a torus knot.

Assume, to the contrary that $S^{3}_{K^{*}_{p,q}}(m/n) \cong -S^{3}_{K^{*}_{p,q}}(m/n')$ so there is an orientation preserving homeomorphism $f: S^{3}_{K^{*}_{p,q}}(m/n) \rightarrow -S^{3}_{K^{*}_{p,q}}(m/n')$. 
By Lemma \ref{lemma:Case1}, \ref{lemma:Case2}, \ref{lemma:Case3}, $|n pq -m|, |n'pq -m|>1$ hence $S^{3}_{K^{*}_{p,q}}(m/n) = E(K^{*}) \cup_T P_{p,q,m,n}, S^{3}_{K^{*}_{p,q}}(m/n') = E(K^{*}) \cup_T P_{p,q,m,n'}$.

By isotopy we assume that $f$ induces homeomorphisms of JSJ pieces.
By Claim \ref{claim:1} in Lemma \ref{lemma:Case4}, $f(P_{p,q,m,n})$ is a JSJ piece of $-E(K^{*})$. 
Since the cable space $C_{p_i,q_i}$ has two boundary components whereas the boundary of $P_{p,q,m,n}$ is connected, we have $f(P_{p,q,m,n}) = -E(T_{p_1,q_1})$. 
Since $f(E(T_{p_1,q_1}))$ is a JSJ piece of $-S^{3}_{K^{*}_{p,q}}(m/n') = -E(K^{*}) \cup_T -P_{p,q,m,n'} $ other than $-E(T_{p_1,q_1})$, we have $f(E(T_{p_1,q_1})) = -P_{p,q,m,n'}$.
This shows that $f$ gives an orientation homeomorphism
\[ f: -f(P_{p,q,m,n}) = E(T_{p_1,q_1}) \rightarrow -P_{p,q,m,n'}.\]

By Claim \ref{claim:A} in Lemma \ref{lemma:Case4}, we have $q_1=2,p_1=-p$ and $K=K^{*}_{p,q}$ is an L-space knot. On the other hand, by \cite{Hom} an iterated torus knot $K_{(-p,2),\ldots,(p_{N-1},q_{N-1}),(p,2)}$ is an L-space knot implies that $-p,p_2,p_3,\ldots,p$ has the same sign. This is a contradiction. 
\end{proof}

\end{document}